\documentclass[11pt]{amsart}
\usepackage[UKenglish]{babel}
\usepackage[utf8]{inputenc} 
\usepackage{amsthm} 
\usepackage{amsfonts} 
\usepackage{mathtools}
\usepackage{enumitem} \setlist[enumerate]{label={\upshape(\arabic*)}}
\usepackage{amssymb}
\usepackage[numbers]{natbib}
\usepackage{url}
\usepackage{mathrsfs}
\usepackage{geometry}
\usepackage{bbm}
\usepackage{tikz-cd}
\usepackage{tikz}
\usepackage{color}
\usepackage{framed}
\usepackage{hyperref}
\hypersetup{
    colorlinks=true, 
    linkcolor=blue, 
    urlcolor=red, 
    citecolor=[rgb]{0,0.7,0},
    linktoc=all 
}

\theoremstyle{definition}
\newtheorem{defn}{Definition}[section]
\newtheorem{prop}[defn]{Proposition}

\newtheorem{thm}[defn]{Theorem}

\newtheorem{conj}[defn]{Conjecture}

\newtheorem{question}[defn]{Question}

\newcommand{\R}{\mathbb{R}}

\newcommand{\Z}{\mathbb{Z}}

\DeclareMathOperator{\conv}{conv}

\def\1{\mathbf{1}}

\newcommand{\ZZ}{{\mathbb{Z}}}
\newcommand{\N}{{\mathbb{N}}}

\newcommand{\hs}{h^*}

\title[A very ample lattice polytope with a non-unimodal $h^*$-vector]{A very ample lattice polytope with a non-unimodal $h^*$-vector}
\author[J.~Hofscheier]{Johannes Hofscheier}
\author[V.~Kurylenko]{Vadym Kurylenko}
\author[B.~Nill]{Benjamin Nill}

\address[J.~Hofscheier]{School of Mathematical Sciences\\University of Nottingham\\ Nottingham\\NG7 2RD\\UK}
\email{johannes.hofscheier@nottingham.ac.uk}

\address[V.~Kurylenko, B.~Nill]{Faculty of Mathematics, Otto-von-Guericke-Universit\"at Magdeburg, Universit\"atsplatz 2, 39106 Magdeburg, Germany.}
\email{\{vadym.kurylenko, benjamin.nill\}@ovgu.de}

\subjclass{52B20, 05A20, 68T05}
\keywords{Ehrhart polynomials, lattice polytopes, unimodality, very ample polytopes}

\begin{document}

\setlength{\parindent}{0pt}
\begin{abstract}
    Lattice polytopes are called \emph{very ample} if for every sufficiently large $k$ every lattice point of height $k$ in the cone over the lattice polytope is the sum of $k$ lattice points of height $1$.
    This is a weakening of the well-known integer decomposition property (also called IDP).
    We give an example of a very ample lattice polytope whose $h^*$-vector is non-unimodal.
    Here, the $h^*$-vector is the coefficient vector of the numerator of the Ehrhart series of the lattice polytope.
    This answers a question of  Ferroni and Higashitani, as well as a related question by Balletti.
    The main question whether IDP lattice polytopes have unimodal $h^*$-vector is still open.
    The example was found using ChatGPT 5.6 Sol.
    It is just the Cartesian square of a lattice polytope belonging to a class of very ample examples constructed by Laso\'{n} and Micha\l ek. 
\end{abstract}

\maketitle

\thispagestyle{empty}

\section{Introduction and main result}

Let us recall the main definitions\footnote{Much of this introduction is directly taken from~\cite{IDP-paper}.
We plan to merge both papers into one publication.}.
We refer to~\cite{beckbook} for references.
A \emph{lattice polytope} $P \subset \R^n$ is the convex hull of finitely many lattice points, i.e., elements of $\Z^n$.
The \emph{Ehrhart polynomial} of $P$ is the unique polynomial $E_P(t)$ with $E_P(k) = |k P \cap \Z^n|$ for $k \in \N_{\ge 1}$.
The \emph{$\hs$-polynomial} $\hs_P(t)$ of $P$ is defined as the numerator of its \emph{Ehrhart generating series}: 
\[
    \sum_{k=0}^\infty E_P(k) t^k = \frac{\hs_P(t)}{(1-t)^{d+1}},
\]
where $d$ denotes the dimension of $P$.
For $\hs_P(t)=\sum_{i=0}^d \hs_i t^i$, we call $(\hs_0, \ldots, \hs_d)$ the {\em $\hs$-vector} of $P$. 

\smallskip

The following properties of lattice polytopes have been intensively studied over the previous years.
We refer to the excellent survey papers~\cite{Braun-Survey} and~\cite{Ferroni}.
An \emph{IDP polytope} (an abbreviation for lattice polytopes with the \emph{Integer Decomposition Property}) is a lattice polytope $P$ such that for every $k \in \N_{\ge 2}$ each lattice point in $kP$ is a sum of $k$ lattice points in $P$.
If there exists some positive integer $N$ such that the previous property holds for all $k \ge N$, then $P$ is called \emph{very ample}.
We say the $\hs$-vector of $P$ is \emph{unimodal} if $\hs_0 \le \hs_1 \le \cdots \le \hs_j\ge \hs_{j+1} \ge \cdots \ge \hs_d$ for some $j \in \{0, \ldots, d\}$.
It is called \emph{log-concave} if $\hs_{i-1} \hs_{i+1} \le (\hs_i)^2$ for every $i \in \{1, \ldots, s-1\}$ where $s \in \ZZ_{\ge0}$ denotes the degree of the polynomial $h_P^*(t)$. 

\smallskip

The following conjecture is currently considered as the main open question in Ehrhart theory.
It is explicitly stated in~\cite[Conjecture~1.1]{Ferroni} and~\cite[Question~1.1]{Van}.

\begin{conj}\label{unimodality}
    Every IDP polytope has a unimodal $\hs$-vector.
\end{conj}

In~\cite{IDP-paper} the authors used machine learning to find an example of an IDP polytope with non-log-concave $h^*$-vector showing that one cannot strengthen the implication in the conjecture.
In this paper we used ChatGPT 5.6 Sol to find an example that shows that one also cannot weaken the assumption to very ampleness.
This completely answers Question~3.9 by Ferroni and Higashitani in~\cite{Ferroni}.

\medskip

Let us now describe the example. It is the Cartesian square of a lattice polytope constructed by Laso\'{n} and Micha\l ek in~\cite{Mat} to disprove several conjectures on IDP lattice polytopes.

\smallskip

For this, let $e_1, \ldots, e_{30}$ be the standard basis vectors of $\R^{30}$.
We define for $i=1, \ldots, 30$
\[
    u_i \coloneqq e_i+e_{i+1}\in\Z^{30}
\]
with indices cyclic modulo $30$, and set
\[
    Q := \conv\!\left(
         (u_1, 332), (u_1, 333), (u_i, 0),(u_i, 1) : 2 \le i \le 30
     \right) \subset \R^{30} \times \R.
\]

In the notation of \cite{Mat}, this is precisely $\mathcal{P}_{15,332}$, a lattice segmental fibration over the edge polytope of the even cycle with $30$ vertices.
\begin{prop}\label{prop}
    The lattice polytope $Q \subset \R^{31}$ has the following properties:
    \begin{enumerate}
        \item its dimension is $29$ and it has $60$ vertices,
        \item it is very ample but not IDP,
        \item its $h^*$-polynomial is
            \[
                h_Q^*(t)=1+30(t+\cdots+t^{14})+346t^{15}.
            \]
    \end{enumerate}
\end{prop}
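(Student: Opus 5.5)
We treat $Q$ as a segmental fibration over the edge polytope $P := \conv(u_1,\dots,u_{30}) \subset \R^{30}$ of the even cycle $C_{30}$. The plan is to read off (1) and (2) from this structure and the results of Laso\'{n}--Micha\l ek in~\cite{Mat}, and to compute (3) by a direct count.

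\emph{Step 1: dimension and vertices.} The $u_i$ are affinely independent except for the single relation $\sum_{i\ \mathrm{odd}} u_i=\sum_{i\ \mathrm{even}} u_i$, which holds because the cycle is even and bipartite. Hence $P$ is a $28$-dimensional circuit polytope, a bipyramid-like simplex configuration, and every $u_i$ is a vertex of $P$. The fibre of $Q$ over each $u_i$ is a unit segment, so $\dim Q = 28+1 = 29$ and the $60$ points $(u_i,\cdot)$ are all vertices. For a vertex over a vertex of the base, both endpoints of the fibre segment are vertices.

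\emph{Step 2: very ampleness and failure of IDP.} This is precisely the content of the construction $\mathcal P_{15,332}$ in~\cite{Mat}, which we could simply cite. For a self-contained argument we would proceed as follows.

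\begin{itemize}
\item \textbf{Describe $kQ \cap \Z^{31}$.} A lattice point of $kQ$ is a point $(x,h)$ with $x \in kP \cap \Z^{30}$, where $x = \sum \lambda_i u_i$ with $\lambda_i \ge 0$ and $\sum \lambda_i = k$. The height $h$ ranges over the interval $[332\lambda_1^{\min},\, 332\lambda_1^{\max}+k]$, where $\lambda_1$ varies along the one-parameter family of representations. The length of that family is governed by the circuit: one can shift by $t$ along the odd/even alternation, subject to nonnegativity.

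\item \textbf{Failure of IDP.} Take the point $x=\tfrac12\sum_i u_i = \1$ of $P$, where $\lambda_1$ can be $0$ or $1$ in the $2$-fold dilate. The fibre of $2Q$ over $2\cdot\1$ then contains heights strictly between those achievable as sums of two points of $Q$, because $332$ is large.

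\item \textbf{Very ampleness.} For large $k$, the representations of $x$ either have $\lambda_1$ uniquely determined (no gap problem) or vary by integer steps once $k \ge 30$ or so. The large height can then be redistributed over many summands.
\end{itemize}

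\emph{Step 3: the $h^*$-polynomial.} Write $Q$ as a Cayley-type object. For each $k$, count the lattice points of $kQ$ by summing over $x \in kP\cap\Z^{30}$ the length of the height interval plus one. The base $P$ is the edge polytope of $C_{30}$, whose $h^*$-polynomial is known (it is $\sum_{i=0}^{14}\binom{29}{2i}$-type data, or more simply it is a circuit polytope of normalized volume $2$...). I would instead triangulate $Q$: cut $P$ along the circuit into the two simplices $\Delta_{\mathrm{odd}}$ and $\Delta_{\mathrm{even}}$, obtained by omitting an odd or an even vertex, and lift. Each piece of $Q$ over a unimodular simplex of $P$ is a product-like prism, a Cayley polytope of segments, whose $h^*$ is computable via half-open decompositions. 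The large height $332$ contributes a single fat simplex whose box points concentrate in degree $15$. Summing the half-open contributions should give $1+30(t+\dots+t^{14})+346t^{15}$. As a check, $h^*(1)=1+420+346=767$ must equal the normalized volume of $Q$, which we would verify independently by a direct volume computation.

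\emph{Main obstacle.} The hard part is this exact $h^*$ computation, specifically controlling the box points of the non-unimodal simplices coming from the tall fibre over $u_1$. The fallback is a direct computer enumeration: compute $E_Q(k)$ for $k=0,\dots,29$ via the fibre-length formula and solve for the $h^*$-coefficients. This is routine but large, and I would present it as a verified computation.
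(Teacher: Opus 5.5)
Your proposal rests on the same two external inputs as the paper: (2) is imported from Laso\'{n}--Micha\l ek~\cite{Mat}, and (3) ultimately falls back on a machine computation. Your argument for (1) is correct. However, the self-contained arguments you sketch around these inputs contain concrete errors and would not work as written.

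\emph{The base.} The circuit $\sum_{i\ \mathrm{odd}}u_i=\sum_{i\ \mathrm{even}}u_i$ has $15$ terms on each side. So $P$ is not cut into two simplices and does not have normalized volume $2$. Each of its two triangulations consists of the $15$ unimodular simplices $\conv(u_j : j\neq i)$, with $i$ odd (resp.\ even). Hence $\mathrm{Vol}(P)=15$ and $\hs_P(t)=1+t+\cdots+t^{14}$.

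\emph{Non-IDP.} The point $\1=\tfrac12\sum_i u_i$ has coordinate sum $30$, so it lies in $15P$, not in $P$ or $2P$. The witness lives at level $15$. There the only integral representations of $\1$ are $\1_{\mathrm{odd}}$ and $\1_{\mathrm{even}}$, so sums of $15$ lattice points of $Q$ over $\1$ reach only heights in $[0,15]\cup[332,347]$. The fibre of $15Q$ over $\1$, however, is $[0,347]$.

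\emph{Very ampleness.} Integral representations move $\lambda_1$ in unit steps for every $k$. The real issue is whether the intervals $[332m,332m+k]$ for consecutive $m$ overlap, and that needs $k\ge 331$, not ``$k\ge 30$ or so''.

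\emph{Fallback.} Brute-force enumeration of $kP\cap\Z^{30}$ up to $k=29$ is not routine, since $E_P(29)$ is of order $10^{16}$.

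Your fibre description, pushed one step further, actually proves (2) and (3) by hand. The incidence matrix of $C_{30}$ is totally unimodular. Hence for $x\in kP\cap\Z^{30}$ the set $\{\lambda\ge 0:\sum_i\lambda_iu_i=x\}$ is a segment with integral endpoints, of the form $\lambda^0+[0,L]\,\epsilon$ with $\epsilon_i=(-1)^{i+1}$ and $L\in\Z_{\ge0}$. Consequently:
\begin{enumerate}
\item $\lambda_1^{\max}-\lambda_1^{\min}=L$, and every integer value of $\lambda_1$ in between comes from an integral representation;
\item the fibre over $x$ has exactly $332L+k+1$ lattice points;
\item the lattice points of $Q$ are its $60$ vertices, so the heights reachable by sums of $k$ of them are $332m+s$ with $\lambda_1^{\min}\le m\le\lambda_1^{\max}$ and $0\le s\le k$.
\end{enumerate}
These heights fill the fibre whenever $L=0$ or $k\ge 331$, which is very ampleness. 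For (3), count the integral $\lambda$ with $|\lambda|=k$ that are not the upper endpoint of their segment, i.e.\ those with all even entries positive. This gives $\sum_x L=\binom{k+14}{29}$, hence
\[
E_Q(k)=(k+1)E_P(k)+332\binom{k+14}{29},\qquad \hs_Q(t)=\hs_{P\times[0,1]}(t)+332t^{15}.
\]
Finally, $\hs_{P\times[0,1]}(t)=1+30(t+\cdots+t^{14})+14t^{15}$ follows directly from $\hs_P$.

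Compared with the paper's citation plus software check, this is a complete proof of the proposition in a few lines. Your ``fat simplex'' intuition is also right: $\conv((u_1,332),(u_i,0):i\ge 2)$ has $\hs=1+331t^{15}$. The fibre count simply avoids the half-open bookkeeping.
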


\begin{proof}
    Dimension, number of vertices and the $h^*$-polynomial can be easily computed with any suitable software package.
    Very ampleness of $Q$ has been shown in \cite{Mat} (more precisely, Theorem~3 and Proposition~6).
    It is not IDP by Theorem~12 in~\cite{Mat}.
\end{proof}

Now, as in Balletti's paper \cite{Balletti} taking a Cartesian product of $Q$ with itself does the trick.

\begin{thm} \label{thm:main}
    $Q \times Q$ is a $58$-dimensional very ample, non-IDP lattice polytope with $3600$ vertices and non-unimodal $h^*$-vector.
\end{thm}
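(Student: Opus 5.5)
The plan is to derive each claim of Theorem~\ref{thm:main} from the corresponding part of Proposition~\ref{prop}, using the fact that every property involved behaves well under Cartesian products. Only the $h^*$-vector needs a genuine computation.

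\textbf{Dimension and vertices.} For polytopes, $\dim(Q\times Q)=2\dim Q=58$. The vertices of a product are exactly the pairs of vertices, so $Q\times Q$ has $60^2=3600$ vertices.

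\textbf{Very ampleness.} A lattice point of height $k$ in the cone over $Q\times Q$ is a pair $(x,y)$ with $x,y\in kQ\cap\Z^{31}$. Let $N$ be the threshold for $Q$ from Proposition~\ref{prop}. For $k\ge N$ we can write $x=a_1+\dots+a_k$ and $y=b_1+\dots+b_k$ with all $a_i,b_i\in Q\cap\Z^{31}$. Then
\[
(x,y)=\sum_{i=1}^k (a_i,b_i),
\]
and each $(a_i,b_i)$ is a lattice point of $Q\times Q$, so $Q\times Q$ is very ample with the same $N$.

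\textbf{Non-IDP.} Since $Q$ is not IDP, there are $k\ge2$ and $x\in kQ\cap\Z^{31}$ that is not a sum of $k$ lattice points of $Q$. Fix a vertex $v$ of $Q$; then $(x,kv)\in k(Q\times Q)$. If $(x,kv)$ were a sum of $k$ lattice points of $Q\times Q$, projecting to the first factor would write $x$ as a sum of $k$ lattice points of $Q$, a contradiction. Hence $Q\times Q$ is not IDP.

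\textbf{The $h^*$-vector.} This is the main step, and I expect it to be the only real obstacle. Lattice points of a product factor, so $E_{Q\times Q}(k)=E_Q(k)^2$. From $h_Q^*(t)=1+30(t+\dots+t^{14})+346t^{15}$ and $\dim Q=29$ we get
\[
E_Q(k)=\sum_{i=0}^{15}h_i^*\binom{k-i+29}{29}.
\]
The Ehrhart series of $Q\times Q$ is then $\sum_k E_Q(k)^2t^k = h^*_{Q\times Q}(t)/(1-t)^{59}$. Equivalently,
\[
h^*_{Q\times Q,\,j}=\sum_{m=0}^{j}(-1)^m\binom{59}{m}E_Q(j-m)^2
\qquad (0\le j\le 58).
\]
One can also organise this as $\sum_{i,i'}h_i^*h_{i'}^*\,c_{i,i'}(t)$, where the polynomials $c_{i,i'}(t)$ are the numerators of $\sum_k\binom{k-i+29}{29}\binom{k-i'+29}{29}t^k$. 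These are nonnegative, explicitly known mixed-Eulerian-type polynomials.

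I would carry out the alternating sum with exact integer arithmetic, which is routine to do by computer. The final step is to exhibit explicit indices $i<j<l$ with $h^*_i>h^*_j<h^*_l$, which shows the $h^*$-vector is not unimodal.

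The heuristic reason to expect such a dip is the isolated spike $346t^{15}$ against the flat run of $30$'s in $h_Q^*$. The term $c_{15,15}$ is weighted by $346^2$, while the cross terms are weighted by only about $30\cdot346$. So the product should carry a pronounced bump coming from the $(15,15)$ contribution that is not smoothed away by the other terms. Producing the exact coefficients, and checking that the dip is genuine and not an artefact of rounding or indexing, is where I would spend the care.
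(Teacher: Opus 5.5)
Your route is the same as the paper's. You use that very ampleness and non-IDP are preserved under products (which you justify explicitly, where the paper just says ``clearly''), that $E_{Q\times Q}=E_Q^2$, the expansion $E_Q(k)=\sum_i h^*_i\binom{k-i+29}{29}$, and the alternating sum $h^*_j=\sum_{m=0}^{j}(-1)^m\binom{59}{m}E_Q(j-m)^2$, which is the paper's formula after $m\mapsto j-m$. All of this is correct, as are the dimension and the vertex count.

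\textbf{The gap.} You stop exactly where the content of the theorem lies. Non-unimodality is a finite numerical fact. Your proposal neither names the offending indices nor reports a single coefficient, so as written it shows only that non-unimodality \emph{can be decided} by a computation, not that the computation comes out the right way. The paper's proof is precisely that output: $h^*_{25}>h^*_{26}<h^*_{27}$, with $h^*_{25}\approx 1.6313\cdot 10^{21}$, $h^*_{26}\approx 1.6279\cdot 10^{21}$ and $h^*_{27}\approx 1.6282\cdot 10^{21}$. The paper gives these as exact 22-digit integers.

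\textbf{Why the heuristic cannot replace the computation.}
\begin{itemize}
\item The weights do not isolate a bump. Every $c_{i,i'}$ has the same total mass $c_{i,i'}(1)=\binom{58}{29}$ and mean degree $14.5+(i+i')/2$. So the $28$ cross terms $c_{i,15}$, $c_{15,i}$ carry combined weight $28\cdot 30\cdot 346=290640$, more than $346^2=119716$. Those with $i$ near $14$ sit right beside the $(15,15)$ bump at degree $29.5$, so nothing in the weighting isolates a dip.
\item The actual dip is very shallow: $h^*_{27}-h^*_{26}\approx 2.3\cdot 10^{17}$, a relative rise of about $1.4\cdot 10^{-4}$.
\item The paper notes that $\mathcal P_{15,332}$ is the minimal member of the Laso\'n--Micha\l ek family for which the phenomenon occurs at all. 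So within that family it disappears for smaller parameters, even though the spike remains large.
\end{itemize}
Only the exact integer evaluation settles the question. To complete the proof you need to carry it out and exhibit the indices $25,26,27$ together with the values.
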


\begin{proof}
    Clearly, Cartesian products preserve very ampleness as well as being non-IDP. Note that for all nonnegative integers $k$ we have $E_{Q \times Q}(k)=(E_Q(k))^2$. Therefore, for integers $r$ we get for the $r$th-coefficient of $h^*_{Q \times Q}(t)$
    \[h_r^*=\sum_{m=0}^r (-1)^{r-m} \binom{59}{r-m} E_Q(m)^2,\]
    where by Proposition~\ref{prop}
    \[E_Q(m)=\binom{m+29}{29}+\left(\sum_{i=1}^{14} 30 \binom{m+29-i}{29}\right) + 346 \binom{m+29-15}{29}.\] 
    Therefore, we can verify non-unimodality by evaluating
    
    \begin{align*}
        \hs_{25} &= 1631347108606059869973,\\
        \hs_{26} &= 1627940480253228812568,\\
        \hs_{27} &= 1628167830322084291128. \qedhere
    \end{align*}
\end{proof}

$Q = \mathcal{P}_{15,332}$ produces the minimal example given by the above construction in terms of dimension and volume. Unimodality also fails when we consider $Q=\mathcal{P}_{15,a}$ for $a\in\{332, \ldots, 338\}$. 

As this is also an example of two very ample lattice polytopes such that their Cartesian product does not have a unimodal $h^*$-vector, this also answers Question~5.1(b) by~\cite{Balletti} and strengthens his main result (Theorem~4.4).
Note the similarity of the $h^*$-vectors of $Q$ with the ones found by Balletti using genetic algorithms.
\begin{question}
    Is it possible to find a $d$-dimensional very ample polytope whose $h^*$-vector is not-unimodal, and there is an index $i>d/2$ where the unimodality is violated? 
\end{question}

\begin{figure}[ht]
    \centering
    \includegraphics[width=1.05\textwidth]{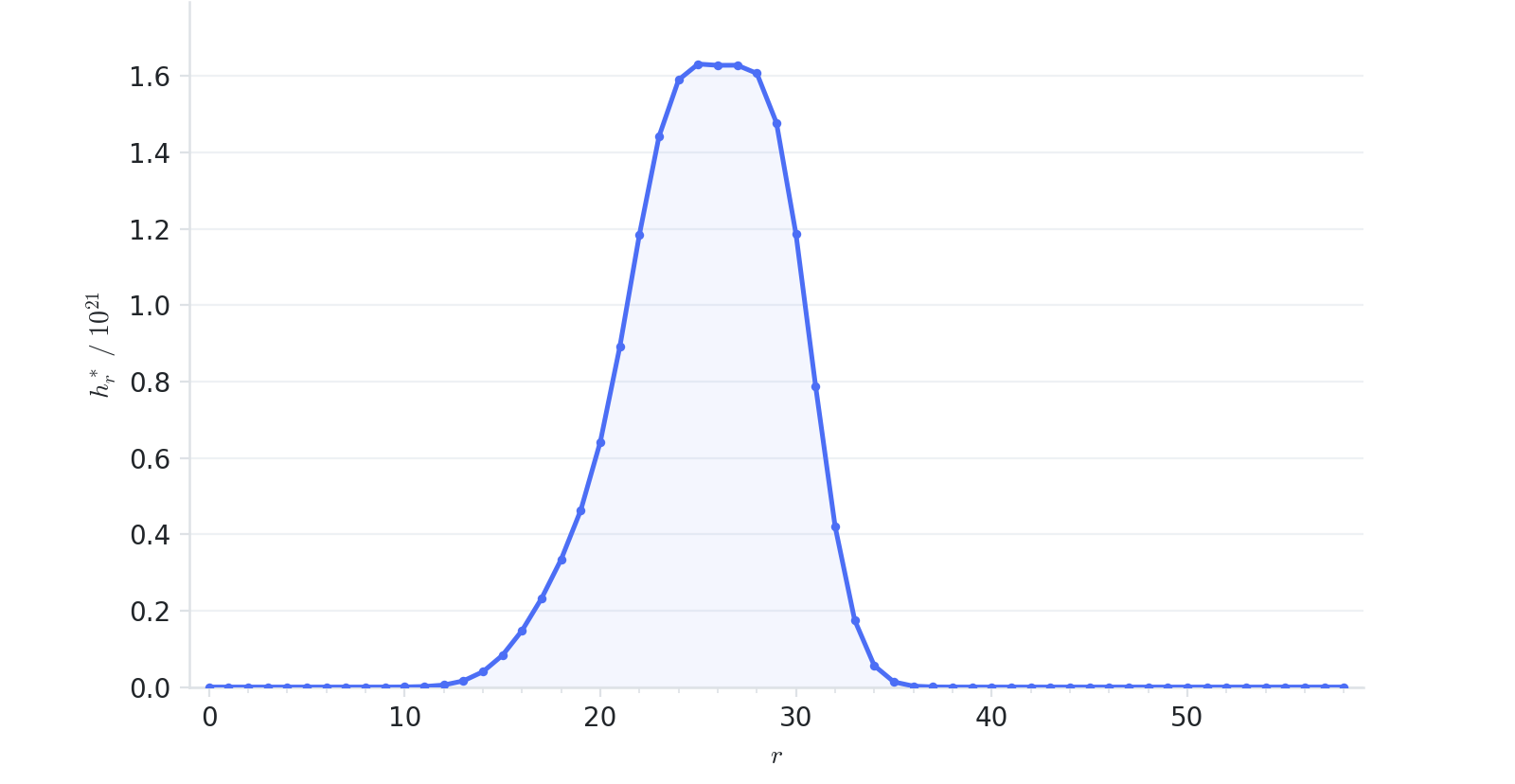}
    \caption{$h^*$-vector from Theorem \ref{thm:main} }
    \label{fig:example}
\end{figure}

\subsection*{Acknowledgment}
The example was found using ChatGPT 5.6 Sol.
This work is funded by the Deutsche Forschungsgemeinschaft (DFG, German Research Foundation) – 539867500 as part of the research priority program Combinatorial Synergies. 

\bibliographystyle{plain}
\bibliography{ref}

@Article{unimodal,
 Author = {Schepers, Jan and Van Langenhoven, Leen},
 Title = {Unimodality questions for integrally closed lattice polytopes},
 FJournal = {Annals of Combinatorics},
 Journal = {Ann. Comb.},
 ISSN = {0218-0006},
 Volume = {17},
 Number = {3},
 Pages = {571--589},
 Year = {2013},
doi={10.1007/s00026-013-0185-6}
 }

@incollection {Braun-Survey,
    AUTHOR = {Braun, Benjamin},
     TITLE = {Unimodality problems in {E}hrhart theory},
 BOOKTITLE = {Recent trends in combinatorics},
    SERIES = {IMA Vol. Math. Appl.},
    VOLUME = {159},
     PAGES = {687--711},
 PUBLISHER = {Springer, [Cham]},
      YEAR = {2016},
doi={10.1007/978-3-319-24298-9_27}
}

@article{Ferroni,
  title={Examples and counterexamples in {Ehrhart} theory},
  author={Ferroni, Luis and Higashitani, Akihiro},
  journal={Preprint arXiv:2307.10852, to appear in EMS Surv. Math. Sci. },
  year={2024},
doi={DOI 10.4171/EMSS/86},
}

@article{Balletti,
author = {Gabriele Balletti},
title = {A Genetic Algorithm to Search the Space of {Ehrhart} {h*}-Vectors},
journal = {Experimental Mathematics},
volume = {34},
number = {4},
pages = {824--835},
year = {2025},
publisher = {Taylor \& Francis},
doi = {10.1080/10586458.2024.2419526}
}

@article{Mat,
author = {Michał Lasoń and Mateusz Michałek},
title = {Non-Normal Very Ample Polytopes – Constructions and Examples},
journal = {Experimental Mathematics},
volume = {26},
number = {2},
pages = {130--137},
year = {2017},
publisher = {Taylor \& Francis},
doi = {10.1080/10586458.2015.1128370}
}

@Book{beckbook,
 Author = {Beck, Matthias and Robins, Sinai},
 Title = {Computing the continuous discretely. {Integer}-point enumeration in polyhedra.},
 Edition = {2nd},
 FSeries = {Undergraduate Texts in Mathematics},
 Series = {Undergraduate Texts Math.},
 Year = {2015},
 Publisher = {New York, NY: Springer},
 Language = {English},
 DOI = {10.1007/978-1-4939-2969-6},
 zbMATH = {6457081},
 Zbl = {1339.52002}
}

@article{Van,
 author = {Schepers, Jan and Van Langenhoven, Leen},
 title = {Unimodality questions for integrally closed lattice polytopes},
 fjournal = {Annals of Combinatorics},
 journal = {Ann. Comb.},
 issn = {0218-0006},
 volume = {17},
 number = {3},
 pages = {571--589},
 year = {2013},
 language = {English},
 doi = {10.1007/s00026-013-0185-6},
 zbMATH = {6210400},
 Zbl = {1432.52024}
}

@article{IDP-paper,
      title={Examples of {IDP} lattice polytopes with non-log-concave $h^*$-vector}, 
      author={Johannes Hofscheier and Vadym Kurylenko and Benjamin Nill},
      year={2025},
      journal={Preprint arXiv:2505.18896},
      archivePrefix={arXiv},
      primaryClass={math.CO},
      url={https://arxiv.org/abs/2505.18896}, 
}

\end{document}